 \newtheorem{theorem}{Theorem}[section]
 \newtheorem{Prop}[theorem]{Proposition}
 \newtheorem{Lem}[theorem]{Lemma}
 \newtheorem{Cor}[theorem]{Corollary}
\newcommand{\ba}{\begin{array}}
\newcommand{\ea}{\end{array}}
\newcommand{\beq}{\begin{equation}}
\newcommand{\eeq}{\end{equation}}
\newcommand\Let{\mathrel{\mathop:\!\!=}}
 \numberwithin{equation}{section}
\begin{document}

\title{Connectedness of planar self-affine sets associated with non-collinear digit sets}

%\author{Ka-Sing Lau} \address{Department of Mathematics, The
%Chinese University of Hong Kong, Hong Kong}
%\email{kslau@@math.cuhk.edu.hk}

\author{King-Shun Leung} \address{ Department of Mathematics and Information Technology,
The Hong Kong Institute of Education, Hong Kong}
\email{ksleung@ied.edu.hk}

\author{Jun Jason Luo} \address{Department of Mathematics,  Shantou Uinversity, Shantou 515063, China} \email{luojun@stu.edu.cn}

%\date{24 July 2010}
%
%
%\thanks {}

\keywords{connectedness, self-affine set, digit set, ${\mathcal
E}$-connected.}

\thanks{The research is supported by STU Scientific Research Foundation for Talents.}

%\subjclass{Primary: subject; Secondary: subject}
%\End Topmatter
%
\date{\today}

\begin{abstract}
We study the connectedness of the planar self-affine sets
$T(A,{\mathcal{D}})$ generated by an integer expanding matrix $A$
with $|\det(A)|=3$ and a non-collinear digit set ${\mathcal D}=\{0,
v, kAv\}$ where $k\in {\mathbb Z}\setminus\{0\}$ and $v\in {\mathbb
Z}^2$ such that $\{v, Av\}$ is linearly independent. By checking the
characteristic polynomials of $A$ case by case,  we obtain a
criterion concerning only $k$ to determine the connectedness of
$T(A,{\mathcal{D}})$.
\end{abstract}

\maketitle

\bigskip

\begin{section} {\bf Introduction}
Let $M_n({\mathbb{Z}})$ denote the set of $n\times n$ matrices with
integer entries, let $A\in M_n({\mathbb{Z}})$ be expanding, i.e.,
all eigenvalues of $A$ have moduli strictly larger than $1$. Assume
$|\det A|=q$, and a finite set
${\mathcal{D}}=\{d_1,\dots,d_q\}\subset {\mathbb{R}}^n$ with
cardinality $q$, we call it a \emph{q-digit set}. It is well known
that there exists a unique \emph{self-affine set} $T\Let
T(A,{\mathcal{D}})$ \cite{LaWa} satisfying:
$$T= A^{-1}(T +
{\mathcal{D}})=\left\{\sum_{i=1}^{\infty}A^{-i}d_{j_i}: d_{j_i}\in
{\mathcal{D}}\right\}.$$ $T$ is called a \emph{self-affine tile} if
such $T$ has a nonvoid interior.

The topological structure of $T(A,{\mathcal{D}})$, especially its
connectedness,  has attracted a lot of attentions in fractal
geometry and  number theory. It was asked by Gr\"ochenig and Haas
\cite{GrHa} that given an expanding integer matrix $A\in
M_n({\mathbb{Z}})$, whether there exists a digit set ${\mathcal{D}}$
such that $T(A,{\mathcal{D}})$ is a connected tile and they
partially solved the question in ${\mathbb R}^2$. Hacon et al.
\cite{HaSaVe} proved that any self-affine tile $T(A,{\mathcal{D}})$
with a $2$-digit set is always pathwise connected.  Lau and his
collaborators (\cite{HeKiLa}, \cite{KiLa}, \cite{KiLaRa},
\cite{LeLa}) systematically studied connectedness of the self-affine
tiles generated by a kind of special digit sets of the form
$\{0,1,\dots,q-1\}v$ where $v\in {\mathbb Z}^n\setminus\{0\}$, which
are called {\em consecutive collinear digit sets}. They observed a
height reducing property (HRP) of  the characteristic polynomial of
$A$ to determine the connectedness of $T(A,{\mathcal{D}})$, and
conjectured that all monic expanding polynomials have the HRP, thus
all the tiles generated by consecutive collinear digit sets are
connected. Akiyama and Gjini \cite{AkGj} solved it up to degree $4$.
However it is still open for arbitrary degree.  In the plane, the
disk-likeness (i.e. homeomorphic to a closed unit disk) is an
interesting topic, Bandt and Gelbrich \cite{BaGe}, Bandt and Wang
\cite{BaWa}, and Leung and Lau \cite{LeLa} investigated the
disk-likeness of self-affine tiles in terms of the neighborhoods of
$T$. Deng and Lau \cite{DeLa}, and Kirat \cite{Ki} concerned
themselves about  a class of  planar self-affine tiles generated by
product digit sets.

With regard to other types of digit sets, there are few results
about the connectedness of $T(A,{\mathcal{D}})$ generated by
non-consecutive or non-collinear digit sets. In \cite{LeLu}, by
counting the neighborhoods of $T$, the authors made a first attempt
to exploit the case of non-consecutive collinear digit sets with
$|\det(A)|=3$, and obtained a complete characterization for
$T(A,{\mathcal{D}})$ to be connected or not. As a subsequent
research, in the present paper, we will focus  on the non-collinear
digit sets. Precisely, we discuss the self-affine sets
$T(A,{\mathcal{D}})$ generated by an expanding matrix $A$ with
$|\det(A)|=3$ and a non-collinear digit set ${\mathcal D}=\{0, v,
kAv\}$ for $k\in {\mathbb Z}\setminus\{0\}$ and $v\in {\mathbb R}^2$
such that $\{v, Av\}$ is linearly independent. By checking the
characteristic polynomials of $A$ case by case, we obtain a
criterion concerning only $k$ to determine the connectedness of
$T(A,{\mathcal{D}})$.

\begin{theorem}
Let $A$ be a $2\times 2$ integral expanding matrix with
$|\det(A)|=3$, and let  ${\mathcal{D}}=\{0,v, kAv\}$ be a digit set
where $k\in {\mathbb Z}\setminus\{0\}$ such that $\{v, Av\}$ is
linearly independent. Then the self-affine set $T(A, {\mathcal{D}})$
is connected if and only if $k=\pm 1$.
\end{theorem}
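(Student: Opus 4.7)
The plan is to reduce the problem to a companion-matrix normal form and then analyze the finitely many characteristic polynomials of an expanding $A$ with $|\det A|=3$ case by case. Since $\{v, Av\}$ is a basis of $\mathbb{R}^2$, the invertible matrix $P = [v\mid Av]$ satisfies $P^{-1}AP = C$, the companion matrix $C = \left(\begin{smallmatrix} 0 & -q \\ 1 & t\end{smallmatrix}\right)$ of $\chi_A(x) = x^2 - tx + q$, where $q = \det A = \pm 3$ and $t = \mathrm{tr}(A) \in \mathbb{Z}$; under $P^{-1}$ the digit set $\{0, v, kAv\}$ becomes $\{0, e_1, ke_2\}$. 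Since linear conjugacy preserves both the expanding property and connectedness of the self-affine set, it suffices to prove the theorem for $(C, \{0, e_1, ke_2\})$. The expanding hypothesis then restricts $t$ to $\{-3,\ldots,3\}$ when $q = 3$ and to $\{-1, 0, 1\}$ when $q = -3$, leaving ten cases to treat.

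I next invoke the standard $\mathcal{E}$-connectedness criterion: letting $\Delta := T - T$, which is the unique nonempty compact set with $C\Delta = \Delta + (\mathcal{D}-\mathcal{D})$, the set $T$ is connected iff the graph on $\mathcal{D}$ with edge $\{d, d'\}$ whenever $d - d' \in \Delta$ is connected. With three digits, this amounts to requiring that at least two of $e_1$, $ke_2$, $e_1 - ke_2$ lie in $\Delta$.

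For the sufficiency direction ($k = \pm 1$), observe that $\pm e_2 \in \mathcal{D}-\mathcal{D}$ and $C^{-1}e_2 = e_1$; applied to the invariance $\Delta \supseteq C^{-1}(\mathcal{D}-\mathcal{D})$ this already yields $e_1 \in \Delta$, hence the edge $\{0, e_1\}$. For the second edge, in each of the ten $(q, t)$-cases I will exhibit an eventually periodic sequence $\{\delta_i\} \subset \mathcal{D}-\mathcal{D}$ such that $\sum_{i\ge 1}C^{-i}\delta_i$ evaluates to either $\pm e_2$ or $e_1 \mp e_2$, the periodic tail being summed in closed form via the Cayley--Hamilton identity $C^{2} = tC - qI$.

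For the necessity direction ($|k| \ge 2$), I will rule out the other two potential edges by bounding the second-coordinate spread $M_2 := \sup\{|\pi_2(x)| : x \in \Delta\}$ strictly below $|k|$. The crucial structural identity is $\pi_2(C^{-1}z) = -\pi_1(z)/q$ coming from the companion form; combined with the self-similar relation $\Delta = C^{-1}(\Delta + (\mathcal{D}-\mathcal{D}))$ it produces coupled linear inequalities on $M_2$ and on $M_1 := \sup|\pi_1 \Delta|$. The main obstacle is to close these inequalities sharply enough when $|t|$ is large: the naive operator-norm estimate $\sum\|C^{-i}\|$ is too coarse in those cases, and I anticipate having to iterate the invariance relation two or more steps (exploiting that the minimal polynomial of $C$ has degree $2$, so pairs of powers collapse to scalars plus multiples of $C$) in order to sum a geometric series with ratio of order $1/|q|$ instead of $1/\sqrt{|q|}$ and thereby obtain $M_2 < |k|$.
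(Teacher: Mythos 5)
Your reduction to the companion matrix $C$ with digit set $\{0,e_1,ke_2\}$ is correct and is essentially the coordinate system the paper uses implicitly (it writes points of $T-T$ as $Lv+KAv$ in the basis $\{v,Av\}$), and your one\-/line observation that $e_1=C^{-1}(\pm e_2)\in T-T$ for $k=\pm1$ handles the edge $\{0,e_1\}$ more cleanly than the paper's explicit expansions. The rest of the sufficiency half is only a promise: the ten eventually periodic expansions realizing the second edge are precisely the content of the paper's case-by-case constructions and still have to be produced, though the strategy is sound.

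The necessity half has a genuine gap. The quantity you propose to bound, $M_2=\sup\{|\pi_2(x)|:x\in T-T\}$, is \emph{not} strictly less than $|k|$ in several of the ten cases when $|k|=2$, so no amount of iterating the invariance relation can close your inequalities. Concretely, for $f(x)=x^2+x-3$ and $k=2$ all the $\alpha_i,\beta_i$ are positive with $\tilde{\alpha}=2$, $\tilde{\beta}=1$, so the constant digit choice gives $\sum_{i\ge1}A^{-i}(2Av)=\big(2+2\tilde{\alpha}\big)v+2\tilde{\beta}\,Av=6v+2Av\in T-T$, whence $M_2\ge 2=|k|$. Disconnectedness does not require $M_2<|k|$: it only requires that the two specific points $kAv$ and $kAv-v$ (i.e.\ $ke_2$ and $ke_2-e_1$) avoid $T-T$, and these are distinguished from the witness above by their \emph{first} coordinate. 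This is exactly why the paper, in the cases where $(1+|k|)\tilde{\beta}\ge|k|$ (e.g.\ $x^2\pm2x+3$ and $x^2\pm x-3$ with $|k|=2$, and $x^2\pm3x+3$ for every $|k|>1$), abandons the sup bound and instead assumes $kAv+lv\in T-T$, peels off the first one or two digits by multiplying by $A$ and applying $f(A)=0$, and derives a contradiction against the \emph{joint} bounds $|L|\le|k|+(1+|k|)\tilde{\alpha}$ and $|K|\le(1+|k|)\tilde{\beta}$. To repair your argument you would need to replace the single-coordinate target $M_2<|k|$ by this kind of two-coordinate, finitely-many-digit exclusion of the specific lattice points in question.
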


\end{section}

\bigskip

\begin{section}{\bf Preliminaries}

In this section,  we give some preparatory results of self-affine
sets which will be used frequently in the paper. Define
$${\mathcal{E}}=\{(d_i,d_j):\ (T+d_i)\cap(T+d_j)\ne\emptyset,\
d_i,\ d_j\in {\mathcal{D}}\}.$$   We say that $d_i$ and $d_j$ are
{\em $\mathcal{E}$-connected} if there exists a finite sequence
$\{d_{j_1},\dots,d_{j_k}\}\subset {\mathcal{D}}$ such that
$d_i=d_{j_1},d_j=d_{j_k}$ and $(d_{j_l},d_{j_{l+1}})\in
{\mathcal{E}}, 1\leq l \leq k-1.$

It is easy to check that $(d_i,d_j)\in {\mathcal{E}}$ if and only if
$$d_i-d_j=\sum_{k=1}^{\infty}A^{-k}v_k \quad \text{where}\quad  v_k\in
\Delta\mathcal{D}\Let \mathcal{D}-\mathcal{D}.$$ Then we get the
following criterion of connectedness of a self-affine set.

\begin{Prop}(\cite{Ha}, \cite{KiLa}) \label{e-connected prop}
A self-affine set $T$ with a digit set $\mathcal{D}$ is connected if
and only if any two $d_i, d_j\in {\mathcal{D}}$  are
$\mathcal{E}$-connected.
\end{Prop}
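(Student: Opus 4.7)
By Proposition~2.1, since $\mathcal{D}=\{0,v,kAv\}$ has three elements, the self-affine set $T(A,\mathcal{D})$ is connected if and only if the graph on $\mathcal{D}$ with edge set $\mathcal{E}$ is connected. For a three-vertex graph this requires at least two of the three potential edges $(0,v)$, $(0,kAv)$, $(v,kAv)$ to belong to $\mathcal{E}$, each of which amounts to asking whether the corresponding difference---$v$, $kAv$, or $kAv-v$---admits an $A$-adic expansion $\sum_{i\geq 1}A^{-i}v_i$ with digits $v_i\in \Delta\mathcal{D}=\{0,\pm v,\pm kAv,\pm(kAv-v)\}$.

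\textbf{Reduction to the companion matrix.} Since $\{v,Av\}$ is linearly independent, I would transfer everything into this basis, in which $A$ acts as the companion matrix $\bigl(\begin{smallmatrix} 0 & -q \\ 1 & p \end{smallmatrix}\bigr)$ of its characteristic polynomial $x^2-px+q$ with $p=\mathrm{tr}(A)$ and $q=\det(A)=\pm 3$. The digits become $(0,0),(1,0),(0,k)$, the Cayley--Hamilton relation reads $A^2v=pAv-qv$, and $A^{-1}$ acts by $A^{-1}(a,b)=(pa/q+b,-a/q)$. The expanding condition restricts $(p,q)$ to finitely many pairs: for $q=3$ the eigenvalues are complex of modulus $\sqrt 3$, forcing $|p|\leq 3$; for $q=-3$ the eigenvalues are real, forcing $|p|\leq 1$---ten cases in all.

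\textbf{Sufficiency ($k=\pm 1$).} A clean uniform observation is that $Av\in\Delta\mathcal{D}$ if and only if $k=\pm 1$; linear independence of $\{v,Av\}$ rules out every other possible coincidence among $\{\pm v,\pm kAv,\pm(kAv-v)\}$. When $k=\pm 1$ the one-term expansion $v=A^{-1}(Av)$ immediately places $(0,v)\in\mathcal{E}$. For a second edge I would establish $(0,kAv)\in\mathcal{E}$ (or $(v,kAv)\in\mathcal{E}$) by constructing, in each of the ten characteristic-polynomial cases, an explicit eventually-periodic expansion of $kAv$ (respectively $kAv-v$) using the recurrence $A^2v=pAv-qv$. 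A short cycle of the orbit $T_{n+1}=AT_n-v_n$ starting at the target typically closes, and the verification reduces to checking that all iterates remain in a fixed invariant region in the $\{v,Av\}$-coordinates.

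\textbf{Necessity ($|k|\geq 2$).} It suffices to isolate the vertex $kAv$, that is, to show $(0,kAv),(v,kAv)\notin\mathcal{E}$, so that the graph has at most one edge and is therefore disconnected. Equivalently, neither $kAv$ nor $kAv-v$ admits a digit expansion in $\Delta\mathcal{D}$. I would prove this by a growth/escape analysis of the orbit $T_{n+1}=AT_n-v_n$ starting from the relevant target: using the explicit companion-matrix formulas, I would extract a discrete obstruction---an unbounded growth estimate, or a failure of a congruence condition modulo $q$---that holds for every choice of digit sequence $\{v_n\}\subset\Delta\mathcal{D}$ once $|k|\geq 2$. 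The role of $|k|=1$ is precisely that the ``jump'' digit $kAv$ then has the minimal extent compatible with the contraction of $A^{-1}$ in the $Av$-direction; once $|k|\geq 2$ this balance is broken.

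\textbf{Main obstacle.} The necessity direction is the hard part. A crude norm estimate shows that the attractor $\mathcal{F}=\sum_{i\geq 1}A^{-i}\Delta\mathcal{D}$ is easily large enough in Euclidean size to contain the targets, so the obstruction must come from the integer lattice structure of $\Delta\mathcal{D}$ together with the specific companion form of $A$. Producing the escape/congruence argument uniformly across all ten characteristic polynomials and for both targets $kAv$ and $kAv-v$ is the computational core of the proof, and is precisely where the dichotomy $|k|=1$ versus $|k|\geq 2$ becomes decisive.
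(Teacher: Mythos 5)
Your proposal does not prove the statement in question. Proposition \ref{e-connected prop} is a general criterion --- valid for an arbitrary expanding $A$ and arbitrary digit set $\mathcal{D}$ --- asserting that the attractor $T$ is connected if and only if the graph on $\mathcal{D}$ with edge set $\mathcal{E}$ is connected. It is quoted in the paper from Hata and Kirat--Lau without proof. What you have written instead is a strategy for the paper's \emph{main theorem} (the $k=\pm 1$ dichotomy for $\mathcal{D}=\{0,v,kAv\}$ with $|\det A|=3$), and your very first sentence invokes ``Proposition~2.1'' --- i.e., the statement you were asked to prove --- as a known tool. That is circular as a proof of the proposition and off-target as a response to the task: nothing in your text addresses why $\mathcal{E}$-connectedness of the digits is equivalent to topological connectedness of $T$.

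A proof of the actual statement would run along the following lines. For necessity: if the $\mathcal{E}$-graph is disconnected, partition $\mathcal{D}=\mathcal{D}_1\cup\mathcal{D}_2$ into two nonempty classes with no edges between them; then $A^{-1}(T+\mathcal{D}_1)$ and $A^{-1}(T+\mathcal{D}_2)$ are disjoint nonempty compact sets whose union is $T$, so $T$ is disconnected. For sufficiency one uses the standard fact that a finite union of connected compact sets whose pairwise-intersection graph is connected is itself connected, applied inductively to the iterates $T_k=\bigcup_{|w|=k}A^{-k}(E+d_{w_1}+Ad_{w_2}+\cdots)$ of a suitable connected compact seed $E$; one checks that $\mathcal{E}$-connectedness at level one propagates to all levels, that each $T_k$ is therefore connected, and that $T_k\to T$ in the Hausdorff metric, whence $T$ is connected as a Hausdorff limit of connected compacta with uniformly controlled overlaps. (The equivalence $(d_i,d_j)\in\mathcal{E}\iff d_i-d_j\in\sum_{k\ge 1}A^{-k}\Delta\mathcal{D}$, which the paper records just before the proposition, is what makes the level-one condition checkable.) None of these ingredients appears in your proposal, so as an argument for Proposition \ref{e-connected prop} it has a complete gap rather than a repairable one.
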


In the following, we mainly consider the planar self-affine set
$T(A,{\mathcal{D}})$ generated by a $2\times 2$ integral expanding
matrix $A$ with $|\det A|=3$ and a digit set ${\mathcal{D}}=\{0,v,
kAv\}$ such that $\{v, Av\}$ is linearly independent, where $k\in
{\mathbb Z}\setminus\{0\}$. Denote the characteristic polynomial of
$A$ by $f(x)=x^2+px+q$. Define $\alpha_i,\beta_i$ by
\begin{equation*}
A^{-i}v=\alpha_iv+\beta_iAv, \quad i=1,2,\dots.
\end{equation*}
Applying the Hamilton-Cayley theorem $f(A)=A^2+pA+qI=0$, it follows
a lemma.

\begin{Lem}(\cite{LeLa}) \label{evaluation}
Let $\alpha_i,\beta_i$ be defined as the above. Then
$q\alpha_{i+2}+p\alpha_{i+1}+\alpha_i=0$ and
$q\beta_{i+2}+p\beta_{i+1}+\beta_i=0$. Especially, $\alpha_1=-p/q,\
\alpha_2=(p^2-q)/q^2;\ \beta_1=-1/q,\ \beta_2=p/q^2$. Moreover for
$\Delta=p^2-4q\ne 0$, we have
$$\alpha_i=\frac{q(y_1^{i+1}-y_2^{i+1})}{\Delta^{1/2}}
\quad\text{and}\quad \beta_i=\frac{-(y_1^i-y_2^i)}{\Delta^{1/2}}$$
where $y_1=\frac{-p+\Delta^{1/2}}{2q},\
y_2=\frac{-p-\Delta^{1/2}}{2q}$ are the two roots of $qx^2+px+1=0$.
\end{Lem}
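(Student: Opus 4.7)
The plan is to prove the lemma in three stages: establish the initial values, derive the recurrences from the Cayley--Hamilton relation, and then solve the recurrences to obtain the closed-form expressions when $\Delta\ne 0$.

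First I would use Cayley--Hamilton $A^2+pA+qI=0$ to compute $A^{-1}$ directly. Rewriting as $A(A+pI)=-qI$ gives $A^{-1}=-\frac{1}{q}(A+pI)=-\frac{p}{q}I-\frac{1}{q}A$. Applying this to $v$ yields $A^{-1}v=-\frac{p}{q}v-\frac{1}{q}Av$, so $\alpha_1=-p/q$ and $\beta_1=-1/q$. Iterating once more, $A^{-2}v=\alpha_1 A^{-1}v+\beta_1 v=(\alpha_1^2+\beta_1)v+\alpha_1\beta_1 Av$, and substituting the values for $\alpha_1,\beta_1$ produces $\alpha_2=(p^2-q)/q^2$ and $\beta_2=p/q^2$, as claimed.

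Next, for the recurrences, I would multiply the Cayley--Hamilton identity on the right by $A^{-(i+2)}$ to obtain $qA^{-(i+2)}+pA^{-(i+1)}+A^{-i}=0$, then apply both sides to $v$ and substitute the defining expansions. This yields
$$(q\alpha_{i+2}+p\alpha_{i+1}+\alpha_i)\,v+(q\beta_{i+2}+p\beta_{i+1}+\beta_i)\,Av=0.$$
Because $\{v,Av\}$ is linearly independent by hypothesis, the two scalar coefficients must each vanish, giving the recurrences in the lemma. This is the only place where the linear independence assumption is essentially used.

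Finally, assuming $\Delta=p^2-4q\ne 0$, I would solve the common recurrence $q\,x_{i+2}+p\,x_{i+1}+x_i=0$ via its characteristic polynomial $qx^2+px+1=0$, which has distinct roots $y_1,y_2$. The general solution is $x_i=c_1y_1^i+c_2y_2^i$, and by Vieta's $y_1+y_2=-p/q$, $y_1y_2=1/q$, hence $y_1-y_2=\Delta^{1/2}/q$. For the $\alpha$-sequence I would verify that the proposed formula $\alpha_i=q(y_1^{i+1}-y_2^{i+1})/\Delta^{1/2}$ automatically solves the recurrence (since $y_1,y_2$ are its characteristic roots) and then check that it matches the two initial values $\alpha_1$ and $\alpha_2$ computed above; a short factoring $y_1^2-y_2^2=(y_1-y_2)(y_1+y_2)$ and $y_1^3-y_2^3=(y_1-y_2)((y_1+y_2)^2-y_1y_2)$ reduces the check to the Vieta identities. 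The verification for $\beta_i=-(y_1^i-y_2^i)/\Delta^{1/2}$ is identical. No real obstacle is expected here; the work is essentially bookkeeping once $A^{-1}$ is written explicitly and the linear independence of $\{v,Av\}$ is invoked to decouple the two recurrences.
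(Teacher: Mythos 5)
Your proposal is correct and follows exactly the route the paper intends: the lemma is stated as a consequence of Cayley--Hamilton (and cited to \cite{LeLa}), and your derivation of $A^{-1}=-\frac{1}{q}(A+pI)$, the decoupling of the two recurrences via the linear independence of $\{v,Av\}$, and the solution of $qx_{i+2}+px_{i+1}+x_i=0$ through the characteristic roots of $qx^2+px+1=0$ all check out against the stated initial values and closed forms. No gaps.
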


Let
\begin{equation*}
\tilde{\alpha}\Let
\sum_{i=1}^{\infty}|\alpha_i|,\quad
\tilde{\beta}\Let
\sum_{i=1}^{\infty}|\beta_i|.
\end{equation*}

\begin{Cor}\label{cor.ref}
Assume $f(x)=x^2+px+q$ and $g(x)=x^2-px+q$ be the characteristic
polynomials of expanding matrices $A$ and $B$, respectively. Let
$\alpha_i, \beta_i, \tilde{\alpha}, \tilde{\beta}$ for $f(x)$ be as
before; let $\alpha_i', \beta_i', \tilde{\alpha'}, \tilde{\beta'}$
be the corresponding terms for $g(x)$. Then
$$\alpha_{2j}'=\alpha_{2j}, \ \alpha_{2j-1}'=-\alpha_{2j-1}, \  \beta_{2j}'=-\beta_{2j}, \
\beta_{2j-1}'=\beta_{2j-1},$$ and hence
$\tilde{\alpha}=\tilde{\alpha'}, \ \tilde{\beta}=\tilde{\beta'}$.
\end{Cor}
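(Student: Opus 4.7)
The plan is to prove the sign relations $\alpha_i' = (-1)^i\alpha_i$ and $\beta_i' = (-1)^{i+1}\beta_i$ directly by induction on $i$, using the recurrence relation from Lemma~\ref{evaluation}. Since the characteristic polynomial of $B$ is obtained from that of $A$ by replacing $p$ with $-p$, the recurrence satisfied by the primed sequences is $q\gamma_{i+2}' - p\gamma_{i+1}' + \gamma_i' = 0$ (in contrast to $q\gamma_{i+2} + p\gamma_{i+1} + \gamma_i = 0$ for the unprimed ones). This single change of sign in the middle coefficient will be what propagates through the induction.

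For the base cases, I would simply read off the values from Lemma~\ref{evaluation} with $p$ replaced by $-p$: this gives $\alpha_1' = p/q = -\alpha_1$, $\alpha_2' = (p^2-q)/q^2 = \alpha_2$, $\beta_1' = -1/q = \beta_1$, $\beta_2' = -p/q^2 = -\beta_2$, matching the claimed pattern at $i=1,2$. For the inductive step on $\alpha$, assuming the statement at indices $i$ and $i+1$, I would solve the primed recurrence for $\alpha_{i+2}'$, substitute $\alpha_{i+1}' = (-1)^{i+1}\alpha_{i+1}$ and $\alpha_i' = (-1)^i\alpha_i$, and factor out $(-1)^{i+1}$ to rewrite the right-hand side as $(-1)^{i+1}(p\alpha_{i+1}+\alpha_i)/q$. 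Using the original recurrence $p\alpha_{i+1}+\alpha_i = -q\alpha_{i+2}$ then produces $(-1)^{i+2}\alpha_{i+2}$, as needed. The argument for $\beta_i$ is essentially identical, shifted in parity by one.

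Once these index-by-index identities are established, the equalities $\tilde{\alpha} = \tilde{\alpha'}$ and $\tilde{\beta} = \tilde{\beta'}$ are immediate, since $|\alpha_i'| = |\alpha_i|$ and $|\beta_i'| = |\beta_i|$ for every $i$, and the series defining $\tilde{\alpha},\tilde{\beta}$ are absolutely convergent (granted by the expanding hypothesis on $A$, which makes $|y_1|,|y_2|<1$ and gives geometric decay of $\alpha_i,\beta_i$).

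There is no real obstacle here; the one place to be careful is keeping track of the parities when the $-p$ substitution propagates, since the $\alpha$ and $\beta$ sequences have opposite initial sign patterns (the odd-indexed $\alpha$'s flip sign while the odd-indexed $\beta$'s do not, because $\alpha_1$ depends linearly on $p$ while $\beta_1$ does not). An alternative, if one wishes to avoid induction, is to use the closed-form expressions in Lemma~\ref{evaluation} and observe that the roots $y_1',y_2'$ of $qx^2-px+1=0$ are exactly $-y_2,-y_1$; substituting into the formulas for $\alpha_i',\beta_i'$ yields the same conclusion in one line, modulo separately treating the degenerate case $\Delta = 0$.
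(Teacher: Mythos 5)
Your proof is correct: the base cases read off from Lemma \ref{evaluation} with $p$ replaced by $-p$, the induction via the two recurrences, and the alternative one-line substitution $y_1'=-y_2$, $y_2'=-y_1$ in the closed forms all check out, and the parities match the stated pattern. The paper offers no proof at all for this corollary, treating it as immediate from Lemma \ref{evaluation}, and your argument is precisely the justification that was left implicit, so there is nothing to compare beyond noting that your inductive version has the minor advantage of covering the degenerate case $\Delta=0$ uniformly.
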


When $|\det A| =3$,  it is known by \cite{BaGe} that there are $10$
eligible characteristic polynomials of $A$:
\begin{equation*}
x^2\pm 3;\quad x^2\pm x+ 3;\quad x^2\pm 2x + 3;\quad x^2\pm 3x +
3;\quad x^2\pm x - 3.
\end{equation*}

Following \cite{LeLu}, together with Corollary \ref{cor.ref},  we
obtained the estimates  or  values of the corresponding
$\tilde{\alpha}$ and $ \tilde{\beta}$ as follows:
\begin{eqnarray}
&& f(x)=x^2 \pm x+3: \quad  \tilde{\alpha}< 0.88,\ \tilde{\beta}<0.63;\label{estimate1} \\
&& f(x)=x^2 \pm 2x+3: \quad  \tilde{\alpha}< 1.17,\ \tilde{\beta}<0.73;\label{estimate2}\\
&& f(x)=x^2 \pm 3x+3: \quad  \tilde{\alpha}< 2.24,\ \tilde{\beta}<1.08;\label{estimate3}\\
&& f(x)=x^2 \pm x-3: \quad  \tilde{\alpha}=2,\
\tilde{\beta}=1.\label{estimate4}
\end{eqnarray}

\end{section}

\bigskip

\begin{section} {\bf Main results}

For a digit set ${\mathcal{D}}=\{0,v, kAv\}$, we denote by
$\Delta{\mathcal D}=\{0,\pm v, \pm(kAv-v), \pm kAv\}$ the difference
set. First we show the following simplest case according to the
characteristic polynomials of $A$.

\begin{theorem}
Let $A$  be a $2\times 2$  integral expanding matrix with
characteristic polynomial $f(x)= x^2 \pm 3$ and
${\mathcal{D}}=\{0,v, kAv\}$ be a digit set where $k\in {\mathbb
Z}\setminus\{0\}$ such that $\{v, Av\}$ is linearly independent.
Then the self-affine set $T(A, {\mathcal{D}})$ is connected if and
only if $k=\pm 1$.
\end{theorem}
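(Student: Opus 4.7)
The plan is to apply Proposition \ref{e-connected prop}, which reduces connectedness of $T(A,\mathcal{D})$ to connectedness of the graph on $\mathcal{D}=\{0,v,kAv\}$ whose edges are the pairs in $\mathcal{E}$. Since this graph has only three vertices, it suffices to establish (a) for $k=\pm 1$, at least two of the three possible pairs lie in $\mathcal{E}$; and (b) for $|k|\geq 2$, the vertex $kAv$ is isolated (so the graph is disconnected).

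For direction (a), I would first observe that for $k=\pm 1$ we have $Av\in\Delta\mathcal{D}$ (since $\Delta\mathcal{D}$ contains $\pm kAv=\pm Av$), so the choice $v_1=Av$ and $v_i=0$ for $i\geq 2$ gives $\sum_{i\geq 1} A^{-i}v_i=A^{-1}(Av)=v\in T(A,\Delta\mathcal{D})$, yielding $(0,v)\in\mathcal{E}$. For a second edge I would construct an explicit periodic series $\sum_i A^{-i}v_i=kAv$ with $v_i\in\Delta\mathcal{D}$. Because $f(x)=x^2\pm 3$ forces $A^2=\mp 3I$, the iterates $A^{-i}v$ alternate cleanly between the $v$- and $Av$-directions with ratios involving $\pm 1/3$; this allows a periodic choice of $v_i$ drawn from $\{\pm v,\pm Av\}\subset\Delta\mathcal{D}$ that collapses the sum into two parallel geometric series evaluating to $kAv$. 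This produces $(0,kAv)\in\mathcal{E}$ and hence a connected graph.

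For direction (b), I would extract from Lemma \ref{evaluation} with $p=0$ and $q=\pm 3$ that the recurrence $q\beta_{i+2}+\beta_i=0$, together with $\beta_1=-1/q$ and $\beta_2=0$, yields $|\beta_{2j+1}|=3^{-(j+1)}$ and $\beta_{2j}=0$, so $\tilde\beta=\sum_{j\geq 0}3^{-(j+1)}=1/2$. For any $w=\sum_i A^{-i}v_i\in T(A,\Delta\mathcal{D})$ with $v_i=a_iv+b_iAv$, the explicit form of $\Delta\mathcal{D}=\{0,\pm v,\pm kAv,\pm(kAv-v)\}$ forces $|a_i|\leq 1$ and $|b_i|\leq|k|$. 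Since $\beta_0=0$, the $Av$-coefficient of $w$ is bounded by $\sum|a_i||\beta_i|+\sum|b_i||\beta_{i-1}|\leq \tilde\beta+|k|\tilde\beta=(1+|k|)/2$, which is strictly less than $|k|$ exactly when $|k|\geq 2$. Since both $kAv$ and $kAv-v$ have $Av$-coefficient of absolute value $|k|$, neither lies in $T(A,\Delta\mathcal{D})$; hence $(0,kAv),(v,kAv)\notin\mathcal{E}$ and $kAv$ is isolated.

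The main obstacle I anticipate is the explicit construction in (a): the bound used in (b) becomes tight precisely at $|k|=1$ (where $(1+|k|)/2=1$), so existence of the second edge is a boundary phenomenon that demands careful sign choices rather than a bare estimate. I would treat one representative case (say $k=1$, $f=x^2+3$) in detail by writing down the period-four sequence $v_1=-v$, $v_2=-Av$, $v_3=v$, $v_4=Av,\ldots$ and verifying the sum by splitting into even and odd indices, then deduce the remaining three sub-cases ($k=-1$ and/or $f=x^2-3$) from the symmetries $v\mapsto-v$ and the $p\mapsto-p$ relation encapsulated in Corollary \ref{cor.ref}.
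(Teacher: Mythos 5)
Your proposal is correct and follows essentially the same route as the paper: Proposition \ref{e-connected prop}, the period-four expansion $Av=\sum_{n\ge0}A^{-4n}\bigl(A^{-1}(-v)+A^{-2}(-Av)+A^{-3}v+A^{-4}(Av)\bigr)$ for $|k|=1$ (this is exactly the paper's (\ref{eq3.02}), and your shortcut $v=A^{-1}(Av)$ for the first edge is fine), and the bound $|K|\le(1+|k|)\tilde{\beta}=(1+|k|)/2<|k|$ for $|k|\ge2$, which is the paper's even/odd splitting recast via Lemma \ref{evaluation}. The one small caveat: Corollary \ref{cor.ref} relates $x^2+px+q$ to $x^2-px+q$, so it says nothing about passing from $x^2+3$ to $x^2-3$; that case needs its own (equally routine) periodic sequence, e.g.\ $v=\sum_{n\ge1}\bigl(A^{-2n}v+A^{-2n-1}(Av)\bigr)$, which the paper likewise dismisses as ``more or less the same.''
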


\begin{proof}

Since the case of $f(x)=x^2-3$ is more or less  the same as that of
$f(x)=x^2+3$, it suffices to show the last one. If $k=1$, then
$\Delta{\mathcal D}=\{0,\pm v, \pm(Av-v), \pm Av\}$. From
$f(A)=A^2+3I=0$, we have
\begin{equation}\label{eq3.00}
I=-2A^{-2}(I+A^{-2})^{-1}=2\sum_{n=1}^{\infty}(-1)^n A^{-2n}
\end{equation} and
\begin{equation}\label{eq3.01}
v=2\sum_{n=1}^{\infty}(-1)^n
A^{-2n}v=\sum_{n=0}^{\infty}A^{-4n}\big(A^{-2}(-v)+A^{-3}(-Av)+A^{-4}v+A^{-5}(Av)\big).
\end{equation}
 Hence $v\in T-T$, or equivalently  $T\cap (T+v)\ne\emptyset$. Moreover,
\begin{equation}\label{eq3.02}
Av=\sum_{n=0}^{\infty}A^{-4n}\big(A^{-1}(-v)+A^{-2}(-Av)+A^{-3}v+A^{-4}(Av)\big)
\end{equation}
which implies $T\cap(T+Av)\ne\emptyset$. Consequently, by
Proposition \ref{e-connected prop},  $T$ is connected  (see Figure
\ref{fig1}(a)).
\begin{figure}[h]
  \centering
 \subfigure[$k=1$]{
  \includegraphics[width=5cm]{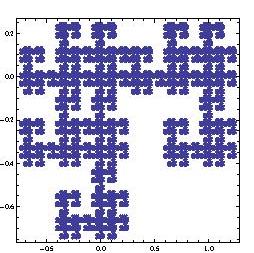}
 }
 \qquad
 \subfigure[$k=2$]{
   \includegraphics[width=5cm]{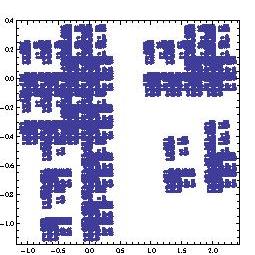}
 }
 \caption{Two cases for $f(x)=x^2+3$ and $v=(1,0)^t$.}\label{fig1}
\end{figure}

If $k=-1$, then $\Delta{\mathcal D}=\{0,\pm v, \pm(Av+v), \pm Av\}$,
and (\ref{eq3.00}), (\ref{eq3.01}), (\ref{eq3.02}) still hold. Hence
$T$ is also connected.

If  $|k|>1$, let $k_i Av+l_iv\in \Delta{\mathcal D}$ for $i\geq 1$,
then a point of $T-T$ can be written as
\begin{eqnarray*}
&&\sum_{i=1}^{\infty}A^{-i}(k_iAv+l_iv)=\sum_{i=1}^{\infty}A^{-2i}(k_{2i}Av+l_{2i}v)+
\sum_{i=1}^{\infty}A^{-2i+1}(k_{2i-1}Av+l_{2i-1}v)\\
&=& \sum_{i=1}^{\infty}(-\frac{1}{3})^{i}(k_{2i}Av+l_{2i}v)+
\sum_{i=1}^{\infty}(-\frac{1}{3})^{i}(-3k_{2i-1}v+l_{2i-1}Av)\\
&=&
\left(k_1+\sum_{i=1}^{\infty}(-\frac{1}{3})^i(l_{2i}+k_{2i+1})\right)v+
\left(\sum_{i=1}^{\infty}(-\frac{1}{3})^i(l_{2i-1}+k_{2i})\right)Av\\
&:=& Lv+KAv.
\end{eqnarray*}
As $|l_i+k_{i+1}|\leq 1+ |k|$, it follows that $|K|\leq (1+
|k|)\sum_{i=1}^{\infty}(\frac{1}{3})^i=(1+|k|)/2<|k|$. Hence
$T\cap(T+kAv)=\emptyset$ and  $(T+v)\cap(T+kAv)=\emptyset$, which
imply that $T$ is disconnected  (see Figure \ref{fig1}(b)).

\end{proof}

\begin{theorem}\label{th2}
Let $A$ be a $2\times 2$ integral expanding matrix with
characteristic polynomial $f(x)= x^2 + px \pm 3$ where $p>0$,  and
let  ${\mathcal{D}}=\{0,v, kAv\}$ be a digit set where $k\in
{\mathbb Z}\setminus\{0\}$ such that $\{v, Av\}$ is linearly
independent. Then the self-affine set $T(A, {\mathcal{D}})$ is
connected if and only if $k=\pm 1$.
\end{theorem}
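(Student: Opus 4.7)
The plan is to follow the two-regime structure used in Theorem~3.1 and adapt each half to the presence of a nonzero middle coefficient $p>0$.

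\emph{Connectedness for $k=\pm 1$.} Here $\Delta\mathcal{D}=\{0,\pm v,\pm Av,\pm(Av\mp v)\}$ (the sign depending on that of $k$). The aim is to exhibit, for each of the four admissible polynomials $f\in\{x^2+x+3,\,x^2+2x+3,\,x^2+3x+3,\,x^2+x-3\}$, explicit series $v=\sum_{i\geq 1}A^{-i}w_i$ and $Av=\sum_{i\geq 1}A^{-i}w_i$ with every $w_i\in\Delta\mathcal{D}$. Such expansions place $v,Av\in T-T$, so that $(0,v),(0,Av)\in\mathcal{E}$, and Proposition~\ref{e-connected prop} then delivers connectedness. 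The engine is the Hamilton-Cayley identity $v=-pA^{-1}v-qA^{-2}v$, iterated and grouped into $A^{-n}$-shifted blocks in the spirit of the construction used in the proof of Theorem~3.1; the block length is tailored to each $p$ so that the per-term coefficient of $A^{-i}$ decomposes as an element of $\Delta\mathcal{D}$.

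\emph{Disconnectedness for $|k|\geq 2$.} Write $w_i=k_iAv+l_iv\in\Delta\mathcal{D}$, so that each element of $T-T$ takes the form $Lv+KAv$ with
\begin{equation*}
K=\sum_{i\geq 1}(k_{i+1}+l_i)\beta_i,\qquad L=k_1+\sum_{i\geq 1}(k_{i+1}+l_i)\alpha_i,
\end{equation*}
and $(k_i,l_i)$ ranging over the admissible set $S:=\{(0,0),(0,\pm 1),(\pm k,0),(k,-1),(-k,1)\}$. A short induction on the recursion of Lemma~\ref{evaluation} yields the identity $\alpha_i=p\beta_i+\beta_{i-1}$ (with $\beta_0=0$), whence
\begin{equation*}
L=k_1+pK+\sum_{i\geq 1}c_{i+1}\beta_i,\qquad c_i:=k_{i+1}+l_i,
\end{equation*}
converting the conditions $kAv\in T-T$ and $kAv-v\in T-T$ into simultaneous constraints on the two ``shifted'' sums against $\{\beta_i\}$. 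Combining $|c_i|\leq|k|+1$ with (\ref{estimate1}) immediately gives $(|k|+1)\tilde\beta<|k|$ for $|k|\geq 2$ when $f=x^2+x+3$, so $K\neq\pm k$ and that case is closed; (\ref{estimate2}) settles $f=x^2+2x+3$ for $|k|\geq 3$ at once.

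\emph{Main obstacle.} For the remaining cases $f\in\{x^2+3x+3,x^2+x-3\}$, and for $f=x^2+2x+3$ with $|k|=2$, the estimates (\ref{estimate2})--(\ref{estimate4}) are too weak for the crude $|K|$-bound alone to rule out $K=\pm k$. The refinement I anticipate is structural: admissibility of $S$ forces $l_ik_i\leq 0$ whenever both are nonzero, so if $|c_i|=|k|+1$ with $\mathrm{sign}(c_i)=\epsilon$ then $l_{i+1}\in\{0,-\epsilon\}$, and hence two consecutive coefficients with $|c_i|=|c_{i+1}|=|k|+1$ must have opposite signs. Coupled with the sign pattern of $\{\beta_i\}$ determined by Lemma~\ref{evaluation} and the complex or real roots of $f$, this sharpens the bound on $|K|$, and the joint identity $L=k_1+pK+\sum c_{i+1}\beta_i$ is then used to rule out the specific targets $(L,K)=(0,k)$ and $(-1,k)$ simultaneously. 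Executing this sign-pattern and joint-constraint analysis polynomial by polynomial for each small $|k|$ is the principal technical burden of the proof.
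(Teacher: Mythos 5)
Your $k=\pm1$ half and your easy disconnectedness cases follow the paper exactly: the explicit radix expansions of $v$ and $Av$ (or $Av+v$) in $\Delta\mathcal{D}$ via Hamilton--Cayley, the decomposition of $T-T$ as $Lv+KAv$ with $K=\sum(k_{i+1}+l_i)\beta_i$, and the crude bound $(1+|k|)\tilde\beta<|k|$ for $x^2\pm x+3$ and for $x^2\pm2x+3$ with $|k|\ge3$ are all as in the paper. The gap is in everything you label the ``main obstacle'': you never actually prove disconnectedness for $x^2\pm3x+3$, $x^2\pm x-3$, or $x^2\pm2x+3$ with $|k|=2$, and the refinement you sketch is not adequate to do so. First, your phrase ``for each small $|k|$'' suggests a finite case-check, but for $x^2\pm3x+3$ ($\tilde\beta<1.08$) and $x^2\pm x-3$ ($\tilde\beta=1$) the crude bound fails for \emph{every} $|k|\ge2$, so you need an argument uniform in arbitrarily large $|k|$. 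Second, the sign-alternation constraint on consecutive maximal $c_i$ (which is correct as stated) does not by itself force $|K|<|k|$: for $x^2+x-3$ all $\beta_i>0$ with $\sum\beta_i=1$, and an admissible alternation such as $c_{2j-1}=k+1$, $c_{2j}=k$ already gives $\sum c_i\beta_i=k+\sum_{j}\beta_{2j-1}>k$, so the target $K=k$ is not excluded by your bound and the identity $L=k_1+pK+\sum c_{i+1}\beta_i$ alone.

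The idea you are missing is the paper's iteration: if $kAv+lv\in T-T$ with expansion $\sum_{i\ge1}A^{-i}(k_iAv+l_iv)$, then multiplying by $A$, reducing $A^2v$ via $f(A)=0$, and subtracting $k_1Av+l_1v$ produces a \emph{new} element of $T-T$ with explicitly computable integer coordinates in the basis $\{v,Av\}$; after two (or more) such steps the coordinate of $v$ or $Av$ grows like $6k$ or $9k$, which exceeds the a priori bounds $|K|\le(1+|k|)\tilde\beta$ and $|L|\le|k|+(1+|k|)\tilde\alpha$ (slopes $\tilde\beta<1.08$ resp.\ $\tilde\alpha<2.24$) for all $|k|\ge2$ or $\ge3$, leaving only finitely many residual subcases ($k=\pm2$) that are then eliminated by tracking the forced values of $l_1,k_2,\dots$ branch by branch. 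Without this coefficient-growth mechanism (or a genuine substitute), your proposal does not establish the ``only if'' direction for the polynomials $x^2\pm2x+3$ ($|k|=2$), $x^2\pm3x+3$, and $x^2\pm x-3$.
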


\begin{proof}

For the  cases of $f(x)=x^2+px+3$ with $0<p< 3$, by using
$0=f(A)=f(A)(A-I)=A^3+(p-1)A^2+(3-p)A-3I$, we obtain
\begin{equation}\label{eq3.1}
I = \sum_{i=1}^{\infty}A^{-3i}\big((1-p)A^2-(3-p)A+2I\big).
\end{equation}

\medskip
{\bf Case 1.} $f(x)=x^2+x+3$: For $k=1$, then $\Delta{\mathcal
D}=\{0,\pm v, \pm(Av-v), \pm Av\}$. By (\ref{eq3.1}),
$I=\sum_{i=1}^{\infty}A^{-3i}\big(-2A+2I\big)$ and
\begin{equation}\label{equa3.5}
v=\sum_{i=1}^{\infty}A^{-3i}\big(-2Av+2v\big)=\sum_{i=0}^{\infty}A^{-3i}\big(A^{-2}(-v)+
A^{-3}(v-Av)+A^{-4}(Av)\big).
\end{equation}
Hence $T\cap (T+v)\ne\emptyset$. Moreover,
\begin{equation}\label{equa3.6}
Av=\sum_{i=0}^{\infty}A^{-3i}\big(A^{-1}(-v)+
A^{-2}(v-Av)+A^{-3}(Av)\big)
\end{equation}
which implies
$T\cap(T+Av)\ne\emptyset$. Consequently, $T$ is connected (see
Figure \ref{fig2}(a)).

\begin{figure}
  \centering
 \subfigure[$x^2+x+3$]{
  \includegraphics[width=5cm]{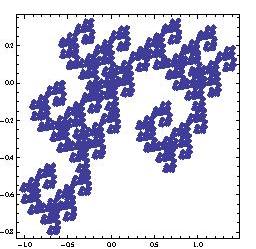}
 }
 \qquad
 \subfigure[$x^2+2x+3$]{
  \includegraphics[width=5cm]{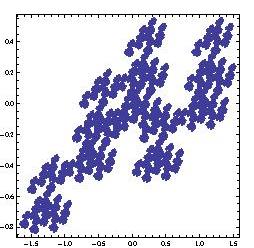}
 }\\
 \subfigure[$x^2+3x+3$]{
 \includegraphics[width=5cm]{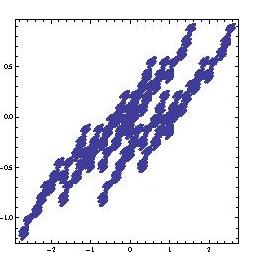}
}
 \qquad
 \subfigure[$x^2+x-3$]{
   \includegraphics[width=5cm]{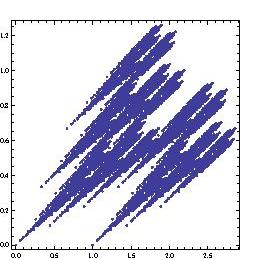}
 }
 \caption{Connected cases for $v=(1,0)^t$ and $k=1$.}\label{fig2}
\end{figure}

For $k=-1$, then $\Delta{\mathcal D}=\{0,\pm v, \pm(Av+v), \pm
Av\}$. From $f(A)=0$, we deduce that $I=(-A-2I)(A^2+I)^{-1}$, which
in turn gives
\begin{eqnarray*}
v &=& -A^{-1}v-2A^{-2}v + A^{-3}v+ 2A^{-4}v-A^{-5}v-2A^{-6}v+A^{-7}v
+ 2A^{-8}v-\cdots \\
&=& A^{-2}(-Av-v)+ A^{-3}(-Av) + A^{-4}(Av+v)+A^{-5}(Av)+
A^{-6}(-Av-v) \\
& & + A^{-7}(-Av)+ A^{-8}(Av+v) + A^{-9}(Av)+\cdots \\
&\in & T-T.
\end{eqnarray*}
Hence $T\cap (T+v)\ne \emptyset$. Multiplying the above expression
by $A$, we have
\begin{eqnarray*}
Av &=& A^{-1}(-Av-v)+ A^{-2}(-Av) + A^{-3}(Av+v)+A^{-4}(Av) \\
& & +A^{-5}(-Av-v)+ A^{-6}(-Av)+ A^{-7}(Av+v) + A^{-8}(Av)+\cdots \\
&\in & T-T
\end{eqnarray*} which implies  $T\cap (T+Av)\ne \emptyset$. It
follows that $T$ is connected.

For $|k|>1$.  A point of $T-T$ can be written as
$$\sum_{i=1}^{\infty}A^{-i}(k_iAv+l_iv)$$ where $k_i Av+l_iv\in \Delta{\mathcal D}$ for $i\geq
1$. By using the relation $A^{-i}v=\alpha_i v + \beta_i Av$,
\begin{eqnarray}\label{eq3.2}
&&\sum_{i=1}^{\infty}A^{-i}(k_iAv+l_iv)=\sum_{i=1}^{\infty}(k_iA^{-i+1}v+
l_iA^{-i}v) \nonumber\\
&=& \sum_{i=1}^{\infty}k_i(\alpha_{i-1}v + \beta_{i-1}Av) + \sum_{i=1}^{\infty}l_i(\alpha_iv + \beta_i Av)  \nonumber\\
&=&\left(k_1+ \sum_{i=1}^{\infty}(k_{i+1}+l_i)\alpha_i\right)v + \left( \sum_{i=1}^{\infty}(k_{i+1}+l_i)\beta_i \right)Av   \nonumber\\
&:=& Lv+KAv.
\end{eqnarray}
As $|l_i+k_{i+1}|\leq 1+ |k|$ and $\tilde{\beta}<0.63$
(\ref{estimate1}), we conclude $|K|\leq 0.63(1+ |k|)< |k|$, which
yields $T\cap(T+kAv)=\emptyset$ and  $(T+v)\cap(T+kAv)=\emptyset$.
Hence $T$ is disconnected  (see Figure \ref{fig3}(a)).

\begin{figure}
  \centering
 \subfigure[$x^2+x+3$]{
  \includegraphics[width=5cm]{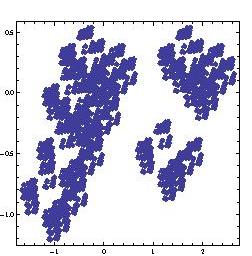}
 }
 \qquad
 \subfigure[$x^2+2x+3$]{
  \includegraphics[width=5cm]{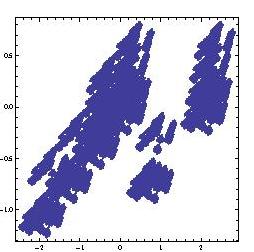}
 }\\
 \subfigure[$x^2+3x+3$]{
 \includegraphics[width=5cm]{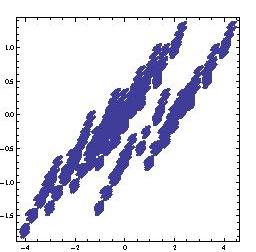}
}
 \qquad
 \subfigure[$x^2+x-3$]{
   \includegraphics[width=5cm]{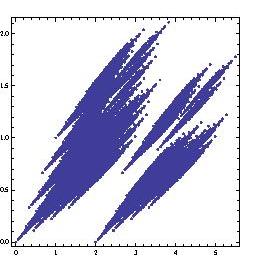}
 }
 \caption{Disconnected cases for $v=(1,0)^t$ and $k=2$.}\label{fig3}
\end{figure}

\medskip
{\bf Case 2.} $f(x)=x^2+2x+3$: For $k=1$. By (\ref{eq3.1}),
$I=\sum_{i=1}^{\infty}A^{-3i}\big(-A^2-A+2I\big)$ and
$$v=\sum_{i=1}^{\infty}A^{-3i}\big(-A^2v-Av+2v\big)=A^{-1}(-v)+\sum_{i=0}^{\infty}A^{-3i}\big(A^{-2}(-v)+ A^{-3}v+A^{-4}(Av-v)\big).$$
Hence $T\cap (T+v)\ne\emptyset$. Moreover,
$$Av=A^{-1}(-Av)+\sum_{i=0}^{\infty}A^{-3i}\big(A^{-1}(-v)+ A^{-2}v+A^{-3}(Av-v)\big)$$ which implies
$T\cap(T+Av)\ne\emptyset$. Consequently, $T$ is connected  (see
Figure \ref{fig2}(b)).

For $k=-1$. We obtain from (\ref{eq3.1}) that
\begin{eqnarray*}
v &=& -A^{-1}v-A^{-2}v+2A^{-3}v-A^{-4}v-A^{-5}v \\
&& +2A^{-6}v-A^{-7}v -A^{-8}v+2A^{-9}v+\cdots\\
&=& A^{-1}(-v)+A^{-2}(-v)+A^{-3}v +A^{-4}(Av)+A^{-5}(-Av-v) \\
&& +A^{-6}v+A^{-7}(Av)+ A^{-8}(-Av-v)+\cdots \\
&\in& T-T,
\end{eqnarray*} implying $T\cap (T+v)\ne \emptyset$. Multiplying the above expression
by $A$, we have
\begin{eqnarray*}
Av+v &=& A^{-1}(-v)+A^{-2}v +A^{-3}(Av)+A^{-4}(-Av-v)\\
&& +A^{-5}v+A^{-6}(Av)+A^{-7}(-Av-v)+\cdots\\
&\in & T-T,
\end{eqnarray*} implying $(T+v)\cap (T-Av)\ne \emptyset$. Hence $T$ is
connected.

For $|k|>1$. By (\ref{eq3.2}) and $\tilde{\beta}<0.73$
(\ref{estimate2}), we have $|K|\leq
(1+|k|)\tilde{\beta}<0.73(1+|k|)$. When $|k|\geq 3$, $|K|<
0.73(1+|k|)<|k|$, which yields $T\cap(T+kAv)=\emptyset$ and
$(T+v)\cap(T+kAv)=\emptyset$. Hence $T$ is disconnected.  When
$k=2$, suppose $(T+2Av)\cap T \ne\emptyset$ or $(T+2Av)\cap(T +
v)\ne\emptyset$, i.e., $2Av+lv\in T-T$ for $l=0$ or $-1$. By
(\ref{eq3.2}), we obtain
\begin{equation}\label{eq3.6}
(k_2+l_1)\beta_1=2-\sum_{i=2}^{\infty}(k_{i+1}+l_i)\beta_i\geq 2-
(1+2)(\tilde{\beta}-|\beta_1|)> 0.8
\end{equation}
where $\beta_1=-1/3$. It follows that  $l_1=-1,\ k_2=-2$. Then using
$f(A)=0$, we have $$A(2Av+lv)-k_1Av-l_1v=(-4+l-k_1)Av-(6+l_1)v\in
T-T.$$  It follows from $l_1=-1$ that $k_1=0$ or $2$. Hence
$-4+l-k_1\leq -4+0+0=-4$, which contradicts the inequality $|K|\leq
(1+|k|)\tilde{\beta}<3\times 0.73=2.19$. So $T$ is disconnected for
$k=2$ (see Figure \ref{fig3}(b)).

 When $k=-2$, suppose  $2Av+lv\in T-T$ where $l=0$ or $1$. Similarly, it yields from (\ref{eq3.6}) that
$l_1=-1,\ k_2=-2$ and $k_1=-2$ or $0$. If $k_1=-2$, then
$(l-2)Av-5v\in T-T$. Multiplying the expression  by $A$ and using
$f(A)=0$, we obtain
$$(l-2)A^2v-5Av-(k_2Av+l_2v)=(1-2l)Av+(6-3l-l_2)v\in T-T$$ and
$6-3l-l_2\geq 6-3-1=2$. On the other hand, by (\ref{eq3.2}) and
(\ref{estimate2}),  $-5.81=-2-3\tilde{\alpha}\leq L\leq
-2+3\tilde{\alpha}<1.81$. This is ridiculous. If $k_1=0$, then
$(l-4)Av-5v\in T-T$ and $|l-4|\geq 3$ contracts $|l-4|=|K|< 2.19$.
Therefore $T$ is disconnected for $k=-2$.

\medskip
{\bf Case 3.} $f(x)=x^2+3x+3$:  For $k=1$. From $(A-I)f(A)=0$, we
get $A^2+A-I=2(A+I)^{-1}$, which yields
$I=-A^{-1}+A^{-2}+2\sum_{i=3}^{\infty}(-1)^{i+1}A^{-i}$ and
\begin{eqnarray*}
v&=&-A^{-1}v+A^{-2}v+2\sum_{i=3}^{\infty}(-1)^{i+1}A^{-i}v\\
&=&A^{-2}(v-Av)+ A^{-3}v +
\sum_{i=0}^{\infty}A^{-2i}\big(A^{-4}(Av-v)+ A^{-5}(v-Av)\big).
\end{eqnarray*}
 Hence $T\cap (T+v)\ne\emptyset$. Moreover,
$$Av=A^{-1}(v-Av)+ A^{-2}v + \sum_{i=0}^{\infty}A^{-2i}\big(A^{-3}(Av-v)+ A^{-4}(v-Av)\big)$$ which implies
$T\cap(T+Av)\ne\emptyset$. Consequently, $T$ is connected  (see
Figure \ref{fig2}(c)).

For $k=-1$. From $f(A)=0$ we get $A+2I=-(A+I)^{-1}$. It follows that
$I= -2A^{-1}-A^{-2}+A^{-3}-A^{-4}+A^{-5}+\cdots$ and
\begin{eqnarray*}
v &=& -2A^{-1}v-A^{-2}v+A^{-3}v-A^{-4}v+A^{-5}v+\cdots \\
&=& A^{-1}(-v) +A^{-2}(-Av-v)+A^{-3}v+
A^{-4}(-v)+A^{-5}v+A^{-6}(-v)+\cdots \\
&\in & T-T,
\end{eqnarray*} then $T\cap (T+v)\ne\emptyset$. Also we can deduce
immediately that
\begin{eqnarray*}
Av+v &=& A^{-1}(-Av-v)+ A^{-2}v+ A^{-3}(-v)+A^{-4}v +
A^{-5}(-v)+\cdots\\
&\in& T-T,
\end{eqnarray*} which yields $(T+v)\cap (T-Av)\ne\emptyset$. As a
result, $T$ is connected.

For $k>1$. By (\ref{eq3.2}) and (\ref{estimate3}), we have $|L|\leq
k+(1+k)\tilde{\alpha}<2.24+3.24k$.  Suppose $kAv+lv\in T-T$ for
$l=0$ or $-1$. Multiplying (\ref{eq3.2}) by $A$ and then subtracting
$k_1Av + l_1v$ from both sides, we see that
\begin{equation}\label{equa3.9}
(-3k+l-k_1)Av -(3k+l_1)v\in T-T. \end{equation} Repeating the
process, we obtain
\begin{equation}\label{equa3.10}
(6k-3l+3k_1-l_1-k_2)Av + (9k-3l+3k_1-l_2)v\in T-T.
\end{equation}
Since $9k-3l+3k_1-l_2\geq 9k-0-3k-1=6k-1 > 2.24+3.24k$, which
exceeds the upper bound of $|L|$. It concludes that
$T\cap(T+kAv)=\emptyset$ and $(T+v)\cap(T+kAv)=\emptyset$, that is,
$T$ is disconnected (see Figure \ref{fig3}(c)).

For $k\leq -3$, then  $|L|\leq -k+(1-k)\tilde{\alpha}<2.24-3.24k$.
It follows from (\ref{equa3.10}) that $|9k-3l+3k_1-l_2|\geq
-9k-3+3k-1=-6k-4 > 2.24-3.24k$, which also exceeds the upper bound
of $|L|$.

For $k=-2$, then $|K|< 3.24$ and $|L|< 8.72$.  From (\ref{equa3.9}),
we have $6+l-k_1< 3.24$, hence $l=-1$ and
 $k_1=2$.  From (\ref{equa3.10}), we have $l_2=-1$ and $3+l_1+k_2<
 3.24$, it follows that   $l_1=0, k_2=-2$, or $l_1=0, k_2=0$, or $l_1=1,
 k_2=-2$.

 When $l_1=0, k_2=-2$. Multiplying (\ref{equa3.10}) by $A$ and then subtracting
$k_3Av + l_3v$, we get $$(-5-k_3)Av+(3-l_3)v\in T-T.$$ By
$|5+k_3|\leq 3.24$, it yields $k_3=-2$. Repeating this process, we
obtain $$(12-l_3-k_4)Av+(9-l_4)v\in T-T.$$ Hence
 we get a contradiction $|12-l_3-k_4|\geq 9>3.24$.

 When $l_1=0, k_2=0$. Multiplying (\ref{equa3.10}) by $A$ and then subtracting
$k_3Av + l_3v$, we get $$(1-k_3)Av+(9-l_3)v\in T-T.$$ It yields
$l_3=1$ and $k_3=0$ or $2$.  Repeating this process, we obtain
$$(3k_3+5-k_4)Av+(3k_3-3-l_4)v\in T-T.$$ If $k_3=0$, then $(5-k_4)Av+(-3-l_4)v\in
T-T$, and $k_4=2$. Finally we get $(-12-l_4-k_5)Av+(-9-l_5)v\in T-T$
and a contradiction $|12+l_4+k_5|\geq 9>3.24$.  If $k_3=2$, then
$(11-k_4)Av+(3-l_4)v\in T-T$, and also $|11-k_4|\geq 9>3.24$.

 When $l_1=1, k_2=-2$. By the same argument as above,  we first get $$(-2-k_3)Av+(6-l_3)v\in
T-T.$$ It yields $k_3=0$.  Repeating this process, we obtain
$(12-l_3-k_4)Av+(6-l_4)v\in T-T$ and $|12-l_3-k_4|\geq 9>3.24$
follows. Therefore $T$ is disconnected for $|k|>1$.

\medskip
{\bf Case 4.} $f(x)=x^2+x-3$: For $k=1$. We deduce from $f(A)=0$
that $I= (A^2-I)^{-1}(-A+2I)$, which yields
$I=\sum_{i=0}^{\infty}A^{-2i}(-A^{-1}+2A^{-2})$ and
\begin{eqnarray*}
v&=&\sum_{i=0}^{\infty}A^{-2i}(-A^{-1}+2A^{-2})v\\
&=&A^{-2}(v-Av)+ \sum_{i=1}^{\infty}A^{-2i}\big(A^{-1}(Av-v)+
A^{-2}v\big).
\end{eqnarray*}
 Hence $T\cap (T+v)\ne\emptyset$. Moreover,
$$Av=A^{-1}(v-Av)+ \sum_{i=1}^{\infty}A^{-2i}\big((Av-v)+
A^{-1}v\big)$$ which implies $T\cap(T+Av)\ne\emptyset$.
Consequently, $T$ is connected  (see Figure \ref{fig2}(d)).

For $k=-1$. It follows from $v=A^{-1}(Av)\in T-T$ that
 $T\cap (T+v)\ne \emptyset$. Moreover, we can get $A+I=
-I+(A-I)^{-1}$ from $f(A)=0$. This implies
\begin{eqnarray*}
Av+v &=& A^{-1}(-Av)+A^{-2}(Av)+A^{-3}(Av)+A^{-4}(Av)+\cdots\\
&\in & T-T,
\end{eqnarray*} that is, $(T+v)\cap (T-Av)\ne \emptyset$. Hence $T$
is connected.

For $k>1$. By (\ref{eq3.2}) and (\ref{estimate4}), we have $|K|\leq
(1+k)\tilde{\beta}=1+k$ and  $|L|\leq k+(1+k)\tilde{\alpha}=2+3k$.
Suppose $kAv+lv\in T-T$ for $l=0$ or $-1$. Multiplying (\ref{eq3.2})
by $A$ and then subtracting $k_1Av + l_1v$ from both sides, we have
$$(-k+l-k_1)Av + (3k-l_1)v\in T-T.$$  Repeating the process, we obtain
\begin{equation}\label{equa3.11}
(4k-l+k_1-l_1-k_2)Av + (-3k+3l-3k_1-l_2)v\in T-T.
\end{equation}
Note $4k-l+k_1-l_1-k_2\geq 2k-1$.  When $k\geq 3$, $2k-1>k+1$ which
contradicts the upper bound of $|K|$, hence $T$ is disconnected;
when $k=2$, it forces $l=0,\ k_1=-2,\ l_1=1,\ k_2=2 $ and
$3Av-l_2v\in T-T$, similarly which implies $$(-3-l_2-k_3)Av+
(9-l_3)v\in T-T.$$ It is required that $|9-l_3|\leq 8$, hence
$l_3=1$. Furthermore, from $(-3-l_2-k_3)Av+ 8v\in T-T$, we can
deduce that
$$(11+l_2+k_3-k_4)Av + (-9-3l_2-3k_2-l_4)v\in T-T.$$ Since
$11+l_2+k_3-k_4\geq 6 >3$, we also get a contradiction, and $T$ is
disconnected. Consequently, $T$ is disconnected for all $k>1$ (see
Figure \ref{fig3}(d)).

For $k<-1$, then $|K|\leq 1-k$. From (\ref{equa3.11}), it follows
that $|K|\geq -4k+l-k_1+l_1+k_2\geq -4k-1+k+1+k=-2k>1-k$, which is
impossible. Therefore   $T$ is disconnected for $|k|>1$.

\end{proof}

\begin{theorem}\label{th3}
Let $B\in M_2({\mathbb{Z}})$ be an expanding integral matrix with
characteristic polynomial $g(x)= x^2-px\pm 3$ where $p> 0$, and let
${\mathcal{D}}=\{0,v, kBv\}$ be a digit set where $k\in {\mathbb
Z}\setminus\{0\}$ such that $\{v, Bv\}$ is linearly independent.
Then the self-affine set $T(B, {\mathcal{D}})$ is connected if and
only if $k=\pm 1$.
\end{theorem}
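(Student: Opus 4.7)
The plan is to reduce Theorem \ref{th3} to Theorem \ref{th2} via the substitution $A \Let -B$. If $B$ has characteristic polynomial $g(x) = x^2 - px \pm 3$ with $p > 0$, then $A = -B$ has characteristic polynomial $f(x) = x^2 + px \pm 3$, which is exactly one of the four polynomials handled in Theorem \ref{th2}. The matrix $A$ remains expanding (its eigenvalues differ from those of $B$ only in sign); the linear independence of $\{v, Bv\}$ is equivalent to that of $\{v, Av\}$ since $Av = -Bv$; and as a subset of ${\mathbb R}^2$ the digit set $\mathcal{D} = \{0, v, kBv\}$ coincides with $\{0, v, (-k)Av\}$.

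The key step is to identify the difference sets
\begin{equation*}
T(B, \mathcal{D}) - T(B, \mathcal{D}) = T(A, \mathcal{D}) - T(A, \mathcal{D}).
\end{equation*}
Any element of the left-hand side has the form $\sum_{i \geq 1} B^{-i} w_i = \sum_{i \geq 1} (-1)^i A^{-i} w_i = \sum_{i \geq 1} A^{-i} w_i'$, where $w_i' \Let (-1)^i w_i$. Because $\Delta\mathcal{D}$ is symmetric around the origin, one has $w_i' \in \Delta\mathcal{D}$, so the point lies in the right-hand side; the reverse containment follows by the same parity swap. This identity is the set-theoretic counterpart of the even/odd duality recorded in Corollary \ref{cor.ref}.

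With the difference sets equal and the digit set $\mathcal{D}$ literally unchanged, the relation $\mathcal{E}$ of Proposition \ref{e-connected prop} coincides for the two self-affine sets. Hence $T(B, \mathcal{D})$ is connected if and only if $T(A, \mathcal{D})$ is. Applying Theorem \ref{th2} to the matrix $A$ with digit set $\{0, v, (-k)Av\}$ yields connectedness precisely when $-k = \pm 1$, that is, when $k = \pm 1$, which is the desired conclusion.

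The only delicate point is the parity bookkeeping in the difference-set identity, and this is rendered trivial by the symmetry $\Delta\mathcal{D} = -\Delta\mathcal{D}$. The four polynomials in Theorem \ref{th3} pair up with those in Theorem \ref{th2} under the involution $A \mapsto -A$, so no case escapes the reduction and no further computation is required. If one prefers a self-contained argument, a case-by-case proof parallel to Theorem \ref{th2} is also possible: one exhibits explicit $\mathcal{E}$-expansions of $v$ and $Bv$ using the relation $B^2 = pB \mp 3I$ when $k = \pm 1$, and for $|k| > 1$ one applies the same $\tilde{\alpha}, \tilde{\beta}$ estimates from (\ref{estimate1})--(\ref{estimate4}) (valid for $g$ by Corollary \ref{cor.ref}) to rule out $kBv \in T - T$ and $kBv - v \in T - T$.
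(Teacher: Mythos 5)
Your proof is correct and is essentially the approach the paper takes: the paper also reduces to Theorem \ref{th2} via $B=-A$ (handling $|k|>1$ by adapting the disconnectedness estimates through Corollary \ref{cor.ref} and $|k|=1$ by rewriting the explicit radix expansions of $v$ and $Bv$), and then records your parity/difference-set argument as the general theorem stated immediately after Theorem \ref{th3}. Your packaging --- the single identity $T(B,\mathcal{D})-T(B,\mathcal{D})=T(A,\mathcal{D})-T(A,\mathcal{D})$, justified by $B^{-i}=(-1)^iA^{-i}$ together with $\Delta\mathcal{D}=-\Delta\mathcal{D}$, followed by one application of Theorem \ref{th2} with $k'=-k$ --- is cleaner and avoids the paper's case split, but the underlying mechanism is identical.
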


\begin{proof}
The characteristic polynomial of $A$ is $f(x)=x^2+px+q$ if and only
if that of $-A$ is $g(x)=x^2-px+q$.  Since only the radix expansions
matter, we may assume $B=-A$.  The proof for the disconnectedness of
$T(B, {\mathcal{D}})$ when $|k|>1$ can be adapted easily from the
proof of Theorem \ref{th2} by applying Corollary \ref{cor.ref}. Let
${\mathcal{D}}=\{0,v, Av\}=\{0,v, -Bv\}$ and ${\mathcal{D}}'=\{0,v,
-Av\}=\{0,v, Bv\}$. For $|k|=1$, we deduce the connectedness of
$T_1:= T(B, {\mathcal{D}})$ (respectively, $T_1'=T(B,
{\mathcal{D}}')$) from that of $T=T(A, {\mathcal{D}})$
(respectively, $T'=T(A, {\mathcal{D}}')$). We only show the case of
$f(x)= x^2+x+3$. In Case 1 of the proof of Theorem \ref{th2}, from
(\ref{equa3.5}) we have \begin{equation*}
v=\sum_{i=0}^{\infty}(-B)^{-3i}\big(B^{-2}(-v)-B^{-3}(v+Bv)+B^{-4}(-Bv)\big)\in
T_1-T_1;
\end{equation*} and from (\ref{equa3.6}) we have
\begin{equation*}
Bv=\sum_{i=0}^{\infty}(-B)^{-3i}\big(B^{-1}(-v)-B^{-2}(v+Bv)+B^{-3}(-Bv)\big)\in
T_1-T_1.
\end{equation*} Hence $T_1$ is connected. Similarly, it can be
verified that $v, Bv\in T_1'-T_1'$ and  $T_1'$ is also connected.
\end{proof}

The above proof is indeed an application of the following more
general result.

\begin{theorem}
Let $A, B\in M_2({\mathbb{Z}})$ be two expanding integral matrices
with characteristic polynomials $f(x)= x^2+px+q$ and $g(x)=
x^2-px+q$ respectively. Let $v,w$ be two non-zero  vectors such that
the two sets $\{v, Av\}$ and $\{w, Bw\}$ are both linearly
independent. Denote by $L$ and $M$ the lattices generated by $\{v,
Av\}$ and $\{w, Bw\}$ respectively.  Let ${\mathcal{D}}=\{c_i
Av+d_iv\in L: i=0,1,\dots, |q|-1\}$  and  ${\mathcal{D}}'=\{-c_i
Bw+d_iw\in M: i=0,1,\dots, |q|-1\}$ be two digit sets.  Then
$T_1=T(A, {\mathcal{D}})$ is connected if and only if $T_2=T(B,
{\mathcal{D}}')$ is connected.
\end{theorem}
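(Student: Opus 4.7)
The strategy is to build an $\mathbb{R}$-linear bijection $\phi:\mathbb{R}^2\to\mathbb{R}^2$ that intertwines $A$ with $-B$ and carries $\mathcal D$ onto $\mathcal D'$; then the $\mathcal E$-adjacency graphs of the two digit sets become isomorphic, and Proposition \ref{e-connected prop} yields the equivalence at once. This is a coordinate-free reformulation of the sign-flipping pattern behind Corollary \ref{cor.ref} and replaces the case-by-case parity bookkeeping by a single operator identity.

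First I would define $\phi$ by $\phi(v)=w$ and $\phi(Av)=-Bw$. Since $\{v,Av\}$ and $\{w,Bw\}$ are both $\mathbb R$-bases of $\mathbb R^2$, $\phi$ is a linear isomorphism. It restricts to a group isomorphism $L\to M$ with $cAv+dv\mapsto -cBw+dw$, so it carries $\mathcal D$ bijectively onto $\mathcal D'$ and $\Delta\mathcal D$ bijectively onto $\Delta\mathcal D'$.

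The key step is the operator identity $\phi A=-B\phi$. It suffices to check this on the basis $\{v,Av\}$. On $v$: $\phi(Av)=-Bw=-B\phi(v)$. On $Av$, using $A^2v=-pAv-qv$ from $f(A)=0$ and $B^2=pB-qI$ from $g(B)=0$,
\begin{equation*}
\phi(A^2v)=-p\phi(Av)-q\phi(v)=pBw-qw=B^2w=-B(-Bw)=-B\phi(Av).
\end{equation*}
Iterating yields $\phi A^{-k}=(-1)^kB^{-k}\phi$ for every $k\geq 1$. Hence for any $x=\sum_{k\geq 1}A^{-k}u_k\in T_1-T_1$ with $u_k\in\Delta\mathcal D$,
\begin{equation*}
\phi(x)=\sum_{k\geq 1}B^{-k}\bigl[(-1)^k\phi(u_k)\bigr],
\end{equation*}
and $(-1)^k\phi(u_k)\in\Delta\mathcal D'$ because any difference set is symmetric; thus $\phi(x)\in T_2-T_2$. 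The reverse inclusion follows from the analogous intertwining $\phi^{-1}B=-A\phi^{-1}$, giving $\phi(T_1-T_1)=T_2-T_2$.

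Combining these facts, for $d_i,d_j\in\mathcal D$ one has $(d_i,d_j)\in\mathcal E$ iff $d_i-d_j\in T_1-T_1$ iff $\phi(d_i)-\phi(d_j)\in T_2-T_2$ iff $(\phi(d_i),\phi(d_j))\in\mathcal E'$. Therefore $\phi$ is an isomorphism of the $\mathcal E$-adjacency graphs, so $\mathcal E$-connectedness of $\mathcal D$ is equivalent to $\mathcal E'$-connectedness of $\mathcal D'$, and Proposition \ref{e-connected prop} completes the proof. The only delicate point is the verification of $\phi A=-B\phi$ via the Cayley--Hamilton relations for $A$ and $B$; after that the argument is entirely formal and requires no case analysis or numerical estimates.
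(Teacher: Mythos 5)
Your proof is correct and follows essentially the same route as the paper: your intertwining map $\phi$ with $\phi A=-B\phi$ is a rigorous packaging of the paper's opening normalization ``we may assume $B=-A$ and $w=-v$,'' and both arguments then rest on the identical observation that the factor $(-1)^k$ appearing on the $k$-th digit of the radix expansion can be absorbed because difference sets are symmetric. If anything, your version is slightly more careful, since it makes explicit (via the change of basis $v\mapsto w$, $Av\mapsto -Bw$) why that normalization loses no generality.
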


\begin{proof}
We may assume $B=-A$ and $w=-v$. Then ${\mathcal{D}}={\mathcal{D}}'$
and $L=M$. Consider $a_iAv+b_iv=-a_iBv+b_iv\in \Delta
{\mathcal{D}}=\Delta {\mathcal{D}}'$, it follows that $a_iAv+b_iv=
A^{-1}(c_1'Av+d_1'v)+ A^{-2}(c_2'Av+d_2'v)+\cdots\in T_1-T_1$ if and
only if $-a_iBv+b_iv= B^{-1}(c_1'Bv-d_1'v)+
B^{-2}(-c_2'Bv+d_2'v)+\cdots\in T_2-T_2$. Hence $c_iAv+d_iv,
c_jAv+d_jv\in {\mathcal D}$ are ${\mathcal E}$-connected if and only
if $-c_iBv+d_iv, -c_jBv+d_jv\in {\mathcal D}'$ are ${\mathcal
E}$-connected, then the theorem is proved by Proposition
\ref{e-connected prop}.
\end{proof}

We can also deduce from Theorems \ref{th2} and \ref{th3} that

\begin{Cor}
Let $A$ be a $2\times 2$ integral expanding matrix with
characteristic polynomial $f(x)=x^2+px+q$ where $|q|=3$. Let $v\in
{\mathbb R}^2$ such that $\{v, Av\}$ is linearly independent. Then
the self-affine set $T(A, {\mathcal{D}})$ is connected for
${\mathcal{D}}=\{0, v, Av+v\}$ or $\{0, v, -Av+v\}$.
\end{Cor}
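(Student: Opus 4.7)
The plan is to reduce both cases to the already-settled $k = \pm 1$ situations of Theorems \ref{th2} and \ref{th3}, using only the fact that $\mathcal{E}$-connectedness depends on $\mathcal{D}$ through its difference set. Recall from the characterization $(d_i, d_j) \in \mathcal{E}$ if and only if $d_i - d_j = \sum_{k \geq 1} A^{-k} v_k$ with $v_k \in \Delta\mathcal{D}$, that $T - T$ depends only on $\Delta \mathcal{D}$. Consequently, if two digit sets $\mathcal{D}$ and $\mathcal{D}'$ satisfy $\Delta \mathcal{D} = \Delta \mathcal{D}'$ and share the same collection of pairwise digit differences up to sign, then their $\mathcal{E}$-graphs have identical edge patterns, and one is $\mathcal{E}$-connected if and only if the other is.

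A direct calculation yields the two pairings
\begin{align*}
\Delta\{0, v, Av + v\} &= \{0, \pm v, \pm Av, \pm(Av + v)\} = \Delta\{0, v, -Av\}, \\
\Delta\{0, v, -Av + v\} &= \{0, \pm v, \pm Av, \pm(Av - v)\} = \Delta\{0, v, Av\}.
\end{align*}
In each pairing, the three pairwise digit differences coincide, up to sign, with those of the paired digit set; for instance, $\{0, v, Av + v\}$ has pairwise differences $\pm v, \pm Av, \pm(Av + v)$, exactly as does $\{0, v, -Av\}$, and similarly for the other pair.

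I would then invoke Theorems \ref{th2} and \ref{th3} at $k = \pm 1$ to conclude that $T(A, \{0, v, \pm Av\})$ is connected for every admissible characteristic polynomial $f(x) = x^2 + px + q$ with $|q| = 3$. By Proposition \ref{e-connected prop} these digit sets are $\mathcal{E}$-connected, and by the transfer described above the same holds for $\{0, v, \pm Av + v\}$, which gives the corollary. There is essentially no hard analytic obstacle here: the only step requiring care is verifying that the pairing genuinely preserves the $\mathcal{E}$-graph structure, which reduces to the symmetry of $T - T$ and the trivial matching of pairwise digit differences listed above.
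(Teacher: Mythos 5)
Your proposal is correct and follows essentially the same route as the paper: observe that $\Delta\{0,v,Av+v\}=\Delta\{0,v,-Av\}$ and $\Delta\{0,v,-Av+v\}=\Delta\{0,v,Av\}$, note that $\mathcal{E}$-connectedness depends only on the difference set (and the pairwise differences, which match up to sign), and invoke Proposition \ref{e-connected prop} together with the $k=\pm1$ cases of the main theorems. Your write-up is in fact slightly more explicit than the paper's about why the matching of pairwise differences preserves connectedness of the $\mathcal{E}$-graph.
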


\begin{proof}
Notice that the difference set $\Delta\{0,v, Av+v\}=\{0, \pm v, \pm
Av, \pm(Av+v)\}=\Delta\{0,v, -Av\}$ and $\Delta\{0,v, -Av+v\}=\{0,
\pm v, \pm Av, \pm(Av-v)\}=\Delta\{0,v, Av\}$. Hence the result
follows from Proposition \ref{e-connected prop}.
\end{proof}

\bigskip

\noindent{\bf Remarks:} The connectedness of self-affine sets or
self-affine tiles is far from known extensively. Even  for the
planar case,  there are still a lot of unsolved questions.  The
following may be some interesting topics related to the paper.

\medskip

\noindent{\bf Q1.}  Can we characterize the connectedness of  $T(A,
{\mathcal D})$ with $|\det(A)|=3$ and  ${\mathcal D}=\{0, v,
kAv+lv\}$?

\medskip

\noindent{\bf Q2.}  For a two dimensional digit set ${\mathcal
D}=\{0, v,\dots, (l-1)v, Av,\dots, kAv\}$ with $l+k=|\det(A)|>3$,
can we apply the same method to study the connectedness of $T(A,
{\mathcal D})$?

\end{section}

\bigskip
\bigskip

\subsection*{Acknowledgments:} The authors would like to thank
Professor Ka-Sing Lau for suggesting a related question and advice
on the work.

\bigskip

\end{document}